\newtheorem{thm}{Theorem}[section]
\newtheorem{Lemma}[thm]{Lemma}
\newtheorem{definition}[thm]{Definition}
\newtheorem{remark}[thm]{Remark}
\begin{document}

\title{On arrow polynomials of checkerboard colorable virtual links}

\author{Qingying Deng\thanks{School of Mathematics and Computational Science, Xiangtan University, Xiangtan, Hunan 411105,
P. R. China}, Xian'an Jin\thanks{School of Mathematical Sciences, Xiamen University, Xiamen, Fujian 361005,
P. R. China. E-mail:xajin@xmu.edu.cn (X. Jin)}, Louis H. Kauffman\thanks{Department of Mathematics, Statistics, and Computer Science, University of Illinois at Chicago, Chicago 60607, USA
and Department of Mechanics and Mathematics, Novosibirsk State University,Novosibirsk, Russia }}


\maketitle

\begin{abstract}
In this paper we give two new criteria of detecting the checkerboard colorability of virtual links by using odd writhe and arrow polynomial of virtual links, respectively. By applying new criteria, we prove that 6 virtual knots are not checkerboard colorable, leaving only one virtual knot whose checkerboard colorability is unknown among all virtual knots up to four
classical crossings.
\end{abstract}

$\mathbf{keywords:}$ Virtual link; checkerboard colorability; odd writhe; arrow polynomial.

\vskip0.5cm

\section{Introduction}
\noindent

In 1996, Kauffman introduced the notion of a virtual knot, which is a generalization
of a classical knot \cite{VKT}. A (virtual) link is the disjoint union of finite number of (virtual) knots.
A \emph{virtual link diagram} is a closed 1-manifold generically
immersed in $\mathbb{R}^{2}$ such that each double point has information of a classical
crossing or a virtual crossing which is indicated by a small circle around the double
point. A \emph{virtual link} is the equivalence class of a virtual link diagram under the
generalized Reidemeister moves including three Reidemeister moves and four virtual Reidemeister moves as illustrated in Fig. \ref{F:classicalRM} and Fig. \ref{RM}, respectively.
Note that the virtual Reidemeister moves are equivalent to the detour move as shown in Fig. \ref{dm}.

\begin{figure}[!htbp]
  \centering
  \includegraphics[width=5.5cm]{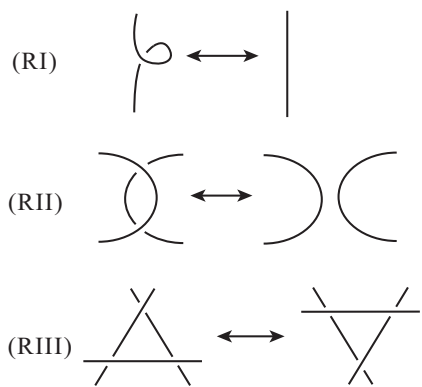}
   \renewcommand{\figurename}{Fig.}
  \caption{Reidemeister moves.}
  \label{F:classicalRM}
\end{figure}

\begin{figure}[!htbp]
  \centering
  \includegraphics[width=6cm]{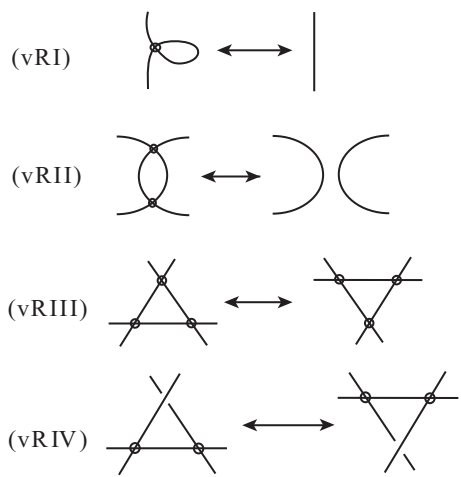}
   \renewcommand{\figurename}{Fig.}
  \caption{Virtual Reidemeister moves.}
  \label{RM}
\end{figure}

\begin{figure}[!htbp]
  \centering
  \includegraphics[width=8cm]{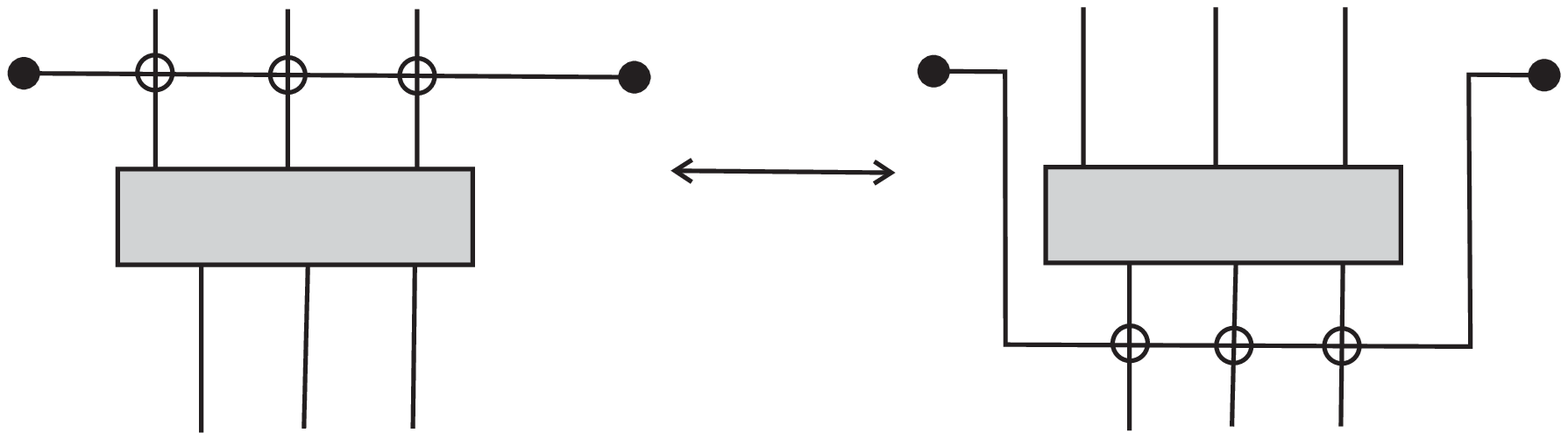}
   \renewcommand{\figurename}{Fig.}
  \caption{Detour move.}
  \label{dm}
\end{figure}

The notion of a checkerboard coloring for a virtual link diagram was first introduced
by Kamada in \cite{Kamada,K2} by using the corresponding abstract link diagram defined in \cite{Kam}.
A virtual link diagram is said to be $\textit{checkerboard colorable}$ if there is a coloring of a small neighbourhood of one side of each arc in the diagram such that near a classical crossing the coloring alternates, and near a virtual crossing the colorings go through independent of the crossing strand and its coloring. Two examples are given in Fig. \ref{ckb}.
A virtual link is said to be $\textit{checkerboard colorable}$ if it has a checkerboard colorable diagram.

\begin{figure}[!htbp]
  \centering
  \includegraphics[width=3.5in]{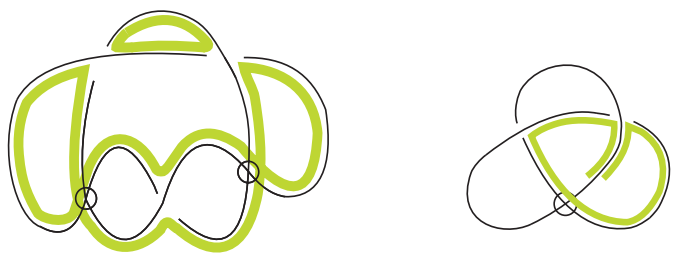}
  \renewcommand{\figurename}{Fig.}
\caption{The left virtual link diagram is checkerboard colorable and the right one is not.}\label{ckb}
\end{figure}

For a virtual link diagram $D$, let $|D|$ be the \emph{underlying 4-valent graph} which is obtained from $D$
by regarding all real crossings as the vertices of $|D|$ and keeping or ignoring virtual crossings.
In \cite{Kamada02} it was observed that giving a checkerboard coloring for $D$ is equivalent to giving
an \emph{alternate orientation} to $|D|$ that is an assignment of orientations to the edges
of $|D|$ satisfying the conditions illustrated in Fig. \ref{Fig.alterorietation}.
Note that every checkerboard colorable virtual link diagram has two kinds of checkerboard colorings and not every virtual link diagram is checkerboard colorable.
Checkerboard colorability of a virtual link diagram is not necessarily preserved by generalized Reidemeister moves.
See the explanation in \cite{Ima}.
Furthermore, note that an alternating virtual link \cite{K2} must be checkerboard colorable (see the explanation in Theorem \ref{main0}), on the contrary, it is not true.

\begin{figure}[!htbp]
  \centering
  \includegraphics[width=4cm]{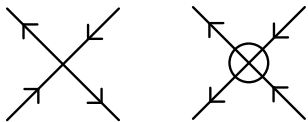}
  \renewcommand{\figurename}{Fig.}
\caption{An alternate orientation.}\label{Fig.alterorietation}
\end{figure}

Sometimes, compared to all virtual links, the properties of classical links are more easily extended to checkerboard colorable virtual links.
For example, Deng, Jin and Kauffman \cite{DengJK} extended the relationship between signed plane graphs and classical link diagrams to signed cyclic graphs and checkerboard colorable virtual link diagrams.
Im, Lee and Lee \cite{ImLeeLee} extended the signature, nullity and determinant of classical oriented links to checkerboard colorable oriented virtual links by presenting the Goeritz matrix for checkerboard colorable virtual links.
Sometimes, although the invariants of classical links can be extended to virtual links completely, say the Jones polynomial of a virtual link \cite{VKT} and the normalized arrow polynomial of a virtual knot \cite{DKArrow}), the invariants of virtual links generally have some features different from those of classical links. But
Kamada \cite{Kamada} proved that Jones polynomials of checkerboard colorable virtual links
have a certain property sharing with those of classical links. Sawollek defined the Sawollek polynomial of a virtual link \cite{Sawollek}, which is in fact the same as the polynomial invariant of virtual links defined by Kauffman and Radford in \cite{KauRad}, and an alternative
method was given by Silver and Williams in \cite{Silver}. Imabeppu \cite{Ima} recently extracted from Sawollek
polynomials two new criteria of checkerboard colorability of virtual links. He applied these two new criteria and the other three well-known methods (introduced by Kamada \cite{Kamada}, Im-Lee-Lee \cite{ImLeeLee} and Im-Lee-Son \cite{ImLeeSon}, respectively) to detect checkerboard colorability of virtual knots up to four classical crossings in the `Virtual Knot Table' \cite{Gr}. There are seven virtual knots whose checkerboard colorability have not been detected.

This paper is organized as follows.
In Section 2, we recall the definition of odd writhe for a virtual knot given by
Kauffman \cite{Kauffman04} and give one new criterion for detecting
checkerboard colorability of virtual links.
In Section 3, we recall the definition of the arrow polynomial for a virtual link given by Dye and
Kauffman \cite{DKArrow}. In Section 4 we give another new criterion for detecting
checkerboard colorability of virtual links in terms of the arrow polynomial. In the final Section 5, by applying new criteria, we prove that 6 virtual knots are not checkerboard colorable, leaving only one virtual knot whose checkerboard colorability is unknown among all virtual knots up to four
classical crossings.

\section{Odd writhes of virtual knots}
\noindent

Let $D$ be an oriented virtual knot diagram. The \emph{writhe} $\omega(D)$ of $D$ is defined as the sum of the signs of classical crossings of $D$. The convention for the signs of classical crossings is shown in Fig. \ref{F:csign}.
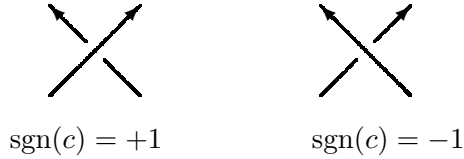
\begin{figure}[!htbp]
  \centering
  \unitlength=0.6mm
  \begin{picture}(0,35)
  \thicklines
  \qbezier(-40,10)(-40,10)(-20,30)
  \qbezier(-40,30)(-40,30)(-32,22)
  \qbezier(-20,10)(-20,10)(-28,18)
  \put(-35,25){\vector(-1,1){5}}
  \put(-25,25){\vector(1,1){5}}
  \put(-32,0){\makebox(0,0)[cc]{$\operatorname{sgn}(c)=+1$}}
  \qbezier(40,10)(40,10)(20,30)
  \qbezier(40,30)(40,30)(32,22)
  \qbezier(20,10)(20,10)(28,18)
  \put(25,25){\vector(-1,1){5}}
  \put(35,25){\vector(1,1){5}}
  \put(35,0){\makebox(0,0)[cc]{$\operatorname{sgn}(c)=-1$}}
  \end{picture}
    \renewcommand{\figurename}{Fig.}
  \caption{Signs of classical crossings.}
  \label{F:csign}
\end{figure}

A crossing $i$ in a virtual knot diagram $D$ is said to be \emph{odd} (resp. \emph{even}) if one encounters an odd (resp. {even}) number
of classical crossings in walking along the diagram on one full path that starts at $i$ and returns to $i$.
Let Odd(D) denote the set of odd crossings of $D$.
In a classical knot diagram $D$, the set Odd(D) is empty.
The \emph{odd writhe} $J(D)$ of $D$ is defined by
\begin{eqnarray}\label{oddwrithe}
J(D) &=& \sum_{c\in Odd(D)}sgn(c).
\end{eqnarray}

It is easy to prove that $J(D)$ is invariant under the generalized Reidemeister moves. We refer the reader to \cite{Kauffman04,Kau3}. For an oriented virtual knot $K$ represented by an oriented knot diagram $D$, let $J(K)=J(D)$, and call it the \emph{odd writhe} of $K$. The invariant $J(K)$ can be used to detect nontriviality, non-classicality and chirality for infinitely many virtual knots \cite{Kauffman04}.
Moreover, it can also be used to detect the checkerboard colourability of a virtual knot as stated in the following theorem.

\begin{thm}\label{main0}
Let $K$ be an oriented virtual knot. If $J(K)\neq 0$, then $K$ is not checkerboard colorable.
\end{thm}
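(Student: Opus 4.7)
The plan is to prove the contrapositive: if $K$ admits a checkerboard colorable diagram $D$, then $J(D)=0$, and hence $J(K)=0$. In fact I will establish the stronger statement that $\text{Odd}(D)=\emptyset$, so that the sum defining $J(D)$ in (\ref{oddwrithe}) is empty.

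Fix an alternate orientation $\alpha$ of $|D|$, whose existence is guaranteed by the equivalence recalled after \cite{Kamada02}, and let $o$ denote the given orientation of $D$. Call an edge of $|D|$ \emph{good} if $\alpha$ and $o$ agree on it and \emph{bad} otherwise. The central local observation, read off from Fig.~\ref{Fig.alterorietation}, is that the good/bad status flips across every classical crossing of $D$: on each strand through the crossing, the two edges receive the same $\alpha$-direction (both pointing in, or both pointing out, because $\alpha$ alternates around the four cyclically ordered edges at the vertex), while $o$ consistently points in on one edge and out on the other. Note that virtual crossings are not vertices of $|D|$, so walking along an edge of $|D|$ between two successive classical crossings never changes the good/bad status, regardless of how many virtual crossings are traversed in the plane.

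Now fix a classical crossing $c$, label its two strands $1$ and $2$, let $f_1$ be the edge leaving $c$ on strand $1$, and $e_2$ the edge entering $c$ on strand $2$. A short inspection of the two possible alternate orientations around $c$ shows that $f_1$ and $e_2$ always share the same good/bad status. Since $K$ is a single-component knot, walking from $f_1$ in the direction of $o$ returns to $c$ for the first time at $e_2$. Along this path the status flips precisely once per classical crossing encountered, so the number of classical crossings met (counted with multiplicity) is even.

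To convert this into a parity statement on $c$ itself, consider the Gauss code of $D$ based at $c$: between its two occurrences of $c$, every other classical crossing label appears either once (if it interleaves with $c$) or twice (if it is enclosed by $c$). If $m$ labels interleave with $c$ and $k$ are enclosed, the count above equals $m+2k$; its evenness forces $m$ to be even, which by definition makes $c$ an even crossing. Since $c$ was arbitrary, $\text{Odd}(D)=\emptyset$, hence $J(D)=0$ and $J(K)=0$. The main obstacle is the local bookkeeping in the second paragraph, in particular verifying the alternating-versus-consistent orientation pattern at a classical crossing and the resulting fact that $f_1$ and $e_2$ share the same status; once those are pinned down, the global parity argument via the Gauss code is immediate.
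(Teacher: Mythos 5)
Your proposal is correct and follows essentially the same route as the paper: arguing the contrapositive that a checkerboard colorable diagram, via its alternate orientation of $|D|$, has only even classical crossings, so that $\mathrm{Odd}(D)=\emptyset$ and $J(D)=J(K)=0$. Your good/bad edge bookkeeping and the Gauss-code parity count simply spell out in detail what the paper dismisses as ``easy to show'' with Fig.~\ref{Fig.evencross}.
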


\begin{proof}
Suppose that $K$ is checkerboard colorable and $D$ is a checkerboard colorable diagram of $K$.
Then its underlying 4-valent graph $|D|$ has an alternate orientation.
It is easy to show that each classical crossing of $D$ must be even as shown in Fig. \ref{Fig.evencross}.
Hence $J(D)=0$ and thus $J(K)=0$.
\begin{figure}[pb]
\centering
\includegraphics[width=3.5cm]{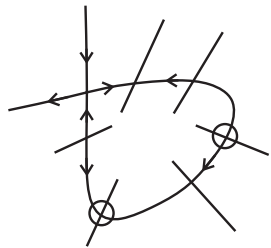}
\renewcommand{\figurename}{Fig.}
\caption{{\footnotesize An alternate orientation of a vertex in $|D|$}}\label{Fig.evencross}
\end{figure}
\end{proof}

We use a table of virtual knots given by Green under the supervision of
Bar-Natan \cite{Gr}.
By computing $J(K)$ of virtual knots with up to four classical crossings, we obtain that virtual knots 2.1, 3.2, 3.3, 3.4, 4.1, 4.3, 4.4, 4.5, 4.7, 4.9, 4.11, 4.14, 4.15,
4.18, 4.20, 4.22, 4.25, 4.27, 4.28, 4.29, 4.30, 4.33, 4.34, 4.37, 4.38, 4.39, 4.40, 4.43, 4.44, 4.45,
4.48, 4.49, 4.52, 4.53, 4.54, 4.60, 4.61, 4.62, 4.63, 4.64, 4.69, 4.73, 4.74, 4.78, 4.80, 4.81, 4.82, 4.83, 4.84, 4.87,
4.88, 4.91, 4.92, 4.94, 4.95, 4.100, 4.101 and 4.104 are not checkerboard colorable.

\section{Arrow polynomials of virtual links}
\noindent

In this section, we recall the arrow polynomial of an oriented virtual link diagram defined by Dye and Kauffman \cite{DKArrow}. Readers familiar with it can skip this section.

Let $D$ be an oriented virtual link diagram. We shall denote the arrow polynomial
by the notation $\langle D\rangle_{A}$.
The \emph{arrow polynomial} of $D$ is based on the oriented state expansion as shown in Fig. \ref{Fig.arrow}.
Note that each classical crossing has two types of smoothing, one \emph{oriented smoothing} and the other \emph{disoriented smoothing}.
Each disoriented smoothing gives rise to a \emph{cusp pair} where each \emph{cusp} is denoted by an angle with arrows either both entering the vertex or both leaving the vertex.
Furthermore, the angle locally divides the plane into two parts: One part is the span of an acute angle (of size less than $\pi$); the other part is the span of an obtuse angle.
The \emph{inside} of the cusp denotes the span of the acute angle.
It is obvious that the total number of cusps of a state circle after smoothing all classical crossing will be even according to the orientations on the edges of the state circle.

\begin{figure}[!htbp]
  \centering
  \includegraphics[width=8cm]{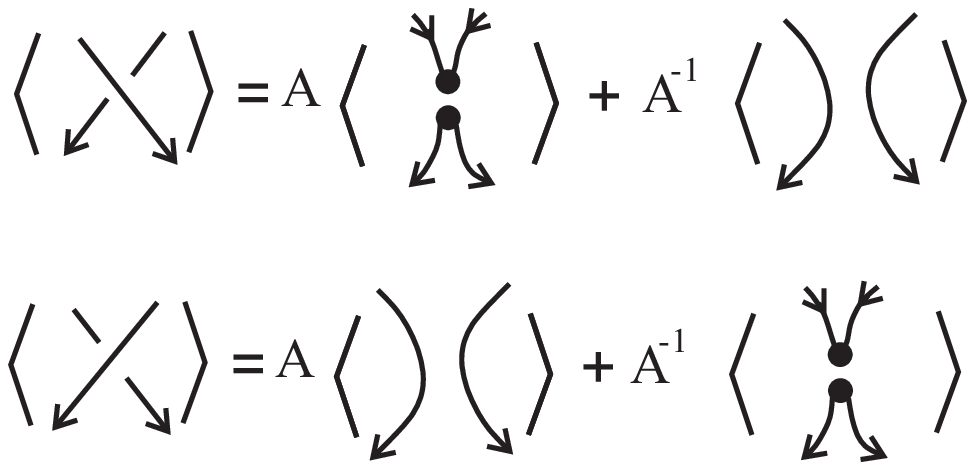}
    \renewcommand{\figurename}{Fig.}
  \caption{Oriented state expansion.}
  \label{Fig.arrow}
\end{figure}

The structure with cusps is reduced according to a set of rules that ensures invariance of the state summation under generalized Reidemeister moves.
The basic convention for this simplification are shown in Fig. \ref{Fig.reduction}.
When the insides of the cusps are on opposite sides of the connecting
segment (a ``zig-zag''), then no cancellation is allowed. Each state circle is seen as a \emph{circle graph} with extra nodes corresponding to the cusps. All graphs are taken up to virtual equivalence, as explained above. Fig. \ref{Fig.reduction} illustrates the simplification of three circle graphs. In one case the graph reduces to a circle with no vertices. In each of the other two cases there is no further cancellation, but the graph is equivalent to one without a virtual crossing. Note that the virtual crossings in a state have the potential to effect the total number of cusps.

\begin{figure}[!htbp]
  \centering
  \includegraphics[width=10cm]{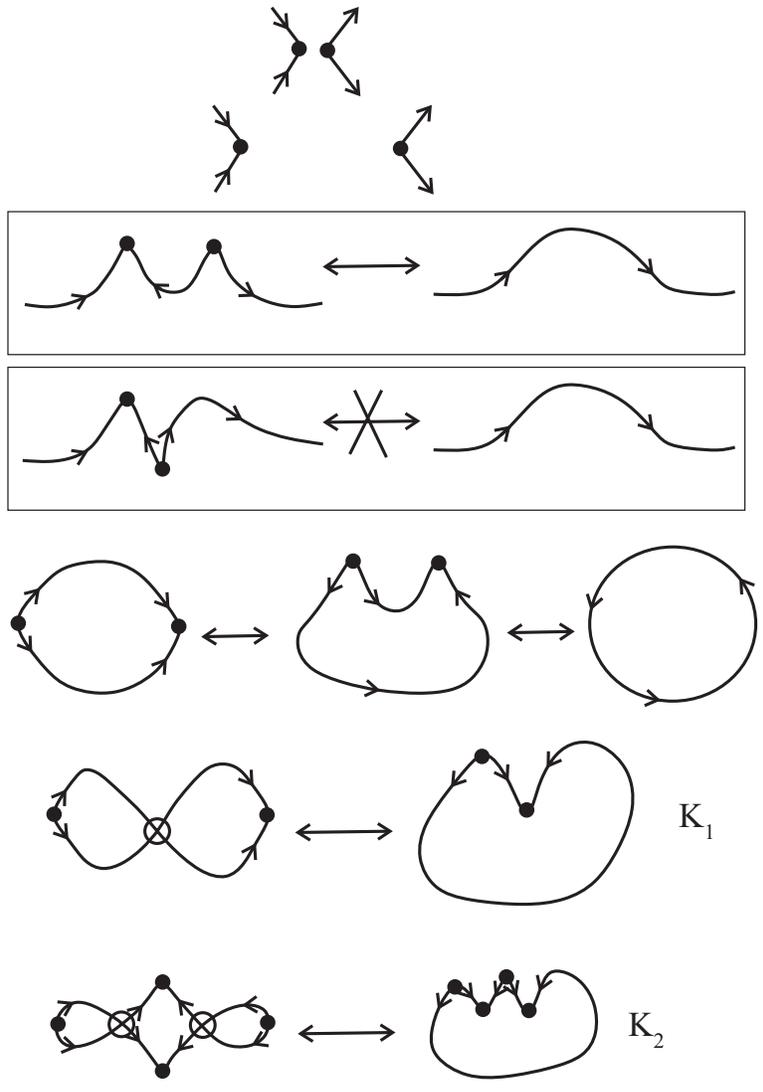}
    \renewcommand{\figurename}{Fig.}
  \caption{Reduction rule for the arrow polynomial.}
  \label{Fig.reduction}
\end{figure}

Use the reduction rule of Fig. \ref{Fig.reduction} so that each state
is a disjoint union of \emph{reduced circle graphs}.
Since such graphs are planar, each is equivalent to
an embedded graph (no virtual crossings) via the detour move. For each reduced circle graph, it has even, say
$2n$ number of vertices that alternate in type around the circle so that $n$ are pointing inward (the angle of the cusp is obtuse in the inside of the circle) and
$n$ are pointing outward (the angle of the cusp is acute in the inside of the circle). The circle with no vertices is evaluated as $d=-A^{2}-A^{-2}$ as is usual for these expansions, and the circle is removed from the graphical expansion.
Let $K_{n}$ denote the
circle graph with $2n$ alternating vertex types as shown in Fig. \ref{Fig.reduction} for $n=1$ and $n=2$. Each
circle graph contributes $d=-A^{2}-A^{-2}$ to the state sum and the graphs $K_{n}$ for $n\geq1$ remain in the graphical expansion. Each $K_{n}$ is an extra variable in the polynomial. Thus a product of the
$K_{n}$'s corresponds to a state that is a disjoint union of copies of these circle graphs.
Note that we continue to use the caveat that an isolated circle or circle graph (i.e. a state consisting in a single circle or single circle graph) is assigned a state circle value of
unity in the state sum. This assures that $\langle D\rangle_{A}$ is normalized so that the
unknot receives the value one.
Note that the arrow polynomial will reduce to the classical bracket polynomial
when each of the new variables $K_{n}$ is set equal to unity.
Formally, we have the following state summation for the arrow polynomial.

\begin{definition}(\cite{DKArrow})
The arrow polynomial $\langle D\rangle_{A}$ of an oriented virtual link diagram $D$ is defined by
\begin{eqnarray}\label{arrowpoly}
  \langle D\rangle_{A} &=& \sum_{S}A^{\alpha-\beta}d^{|S|-1}\langle S\rangle,
\end{eqnarray}
where $S$ runs over the oriented bracket states of the diagram, $\alpha$ denotes the number of smoothings
with coefficient $A$ in the state $S$ and $\beta$ denotes the number with coefficient
$A^{-1}$, $d=-A^{2}-A^{-2}$, $|S|$ is the number of circle graphs in the state, and
$\langle S\rangle$ is a product of extra variables $K_{1},K_{2},...$ associated with the non-trivial circle graphs in the state $S$.
\end{definition}

Note that each circle graph (trivial or not) contributes to the power of
$d$ in the state summation, but only non-trivial circle graphs contribute to
$\langle S\rangle$. Let $D$ be an oriented virtual link diagram with writhe $\omega(D)$.
The normalized version is defined by
\begin{eqnarray}\label{Norarrowpoly}
  \langle D\rangle_{NA} &=& (-A^{3})^{-\omega(D)}\langle D\rangle_{A}.
\end{eqnarray}
For an oriented virtual link $L$ represented by an oriented link diagram $D$, we denote $\langle D\rangle_{NA}$ by $\langle L\rangle_{NA}$, and call it the \emph{arrow polynomial} of $L$.

\begin{thm}(\cite{DKArrow}, Theorem 1.2)
Let $D$ be an oriented virtual link diagram. Then $\langle D\rangle_{A}$ is an
invariant under the Reidemeister moves $II$ and $III$ and virtual Reidemeister
moves.
\end{thm}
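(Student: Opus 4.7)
The plan is to follow the standard argument for invariance of the Kauffman bracket polynomial, adapted to the oriented state expansion of Fig.~\ref{Fig.arrow}. Since the state sum $\langle D\rangle_A$ has exactly the same algebraic form as the bracket polynomial, with the sole novelty that disoriented smoothings decorate state circles with cusps and that states are then viewed modulo the reduction rules of Fig.~\ref{Fig.reduction}, the core of the argument is to enumerate states at each Reidemeister move, group them by their underlying reduced circle graph, and check that their coefficients cancel or match correctly.

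For Reidemeister move II, one enumerates the four states obtained by choosing, at each of the two classical crossings, either the oriented or the disoriented smoothing. This must be done both for the parallel and for the antiparallel R2 move, since the two smoothings in Fig.~\ref{Fig.arrow} depend on the strand orientations. In two of the four cases the smoothings at the two crossings are of different types, so the coefficients $A \cdot A^{-1}$ contribute $1$ and these states reassemble to give the diagram after R2. In the remaining two cases the coefficients $A^2$ and $A^{-2}$ must combine with an extra factor of $d=-A^{2}-A^{-2}$ coming from a disappearing closed circle, so that the net contribution is zero. The critical point absent from the classical bracket argument is that in the ``both disoriented'' state, the two cusps produced sit adjacent on a common arc with their acute angles facing one another, so by the first reduction rule of Fig.~\ref{Fig.reduction} they cancel, producing a genuine closed loop and confirming that the relevant states indeed differ only by a trivial loop. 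Verifying that the cusps always arrange themselves in the ``cancelling'' rather than the ``zig-zag'' pattern is the main technical step, and it is forced by the way the arrows attach in Fig.~\ref{Fig.arrow}.

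With R2 invariance in hand, invariance under Reidemeister III follows formally, just as for the Kauffman bracket: resolving one selected crossing in the R3 tangle on either side of the move turns the two diagrams into pairs that differ by an R2 move and a planar isotopy, so the R3 equation reduces to the R2 equation. Because only one crossing is smoothed in each summand and the remaining two crossings are carried along unchanged, the cusp data on the two sides of the R3 equation are in obvious correspondence and no new cusp bookkeeping is required.

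Invariance under the four virtual Reidemeister moves is essentially automatic from the definition. By the detour move of Fig.~\ref{dm}, any arc consisting only of virtual crossings can be replaced by any other arc with the same endpoints and only virtual crossings. In the state sum, every state is a disjoint union of reduced circle graphs taken up to virtual equivalence, and the coefficients $A^{\alpha-\beta}$ depend only on the classical smoothings, which are untouched by a virtual move. Hence each summand $A^{\alpha-\beta} d^{|S|-1}\langle S\rangle$ is unchanged. The principal obstacle in the whole argument is thus the case analysis for R2 and, inside it, the verification that the cusp pairs emerging from disoriented smoothings reduce correctly; once this is pinned down, the rest of the proof is a direct transcription of the classical invariance argument.
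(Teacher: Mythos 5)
This theorem is not proved in the paper at all: it is quoted from Dye--Kauffman \cite{DKArrow} (their Theorem 1.2), so your proposal can only be measured against the standard argument there, which is indeed the bracket-style verification you outline (enumerate states at R2, reduce cusps, derive R3 from R2 by resolving one crossing, and dispose of the virtual moves via the detour move). The strategy is right, but the R2 accounting --- which you yourself identify as the main technical step --- is wrong as written, and with your allocation the cancellation does not close up.

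Concretely: in an R2 configuration the two crossings are mirror images, so the two states in which the smoothings have the same underlying \emph{shape} (both identity-shaped, or both cap-cup-shaped) are the ones whose coefficients multiply to $A\cdot A^{-1}=1$, while the two mixed-shape states carry $A^{2}$ and $A^{-2}$. Exactly one of the coefficient-$1$ states reassembles to the diagram after R2; the \emph{other} coefficient-$1$ state is the one containing the extra closed circle, and it is this state --- not the $A^{\pm 2}$ states, which contain no closed circle --- that supplies the factor $d=-A^{2}-A^{-2}$ giving $A^{2}+A^{-2}+d=0$ against the two mixed states. Your version has the two ``different type'' states each contributing the post-R2 diagram with coefficient $1$ (which would double it) and attaches the disappearing circle to the $A^{\pm2}$ states, so the sum you describe is not the identity. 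The cusp analysis is also incomplete: whether the ``both disoriented'' state is the identity-shaped one or the one with the closed circle depends on whether the two strands are parallel or antiparallel. For parallel orientation the closed circle lies in the both-disoriented state and carries the two cusps that must cancel, and one must further check that the residual cusps on the open arcs agree in all three cap-cup-type states so the cancellation is local; for antiparallel orientation the both-disoriented state is the identity-shaped one (its cusps cancel in pairs on each strand), the closed circle has no cusps, and each mixed state acquires two cusps on one arc whose cancellation must also be verified. None of this appears in your sketch beyond the single ``both disoriented'' case, so the central R2 verification needs to be redone; the R3 and virtual-move parts are acceptable in outline once R2 is fixed.
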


\begin{thm}(\cite{DKArrow}, Theorem 1.6)
Let $L$ be an oriented virtual link.
Then $\langle L\rangle_{NA}$ is invariant under the generalized Reidemeister moves.
\end{thm}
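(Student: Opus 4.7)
The plan is to derive Theorem 1.6 from the previous theorem by treating Reidemeister I as the only remaining move, in close analogy with the classical case of the Kauffman bracket. First I would observe that the writhe $\omega(D)$ is unchanged by the virtual Reidemeister moves (they involve no classical crossings), by Reidemeister II (the two newly created classical crossings carry opposite signs and so cancel in $\omega$), and by Reidemeister III (the three classical crossings are merely repositioned, with signs preserved). Combined with the invariance of $\langle D\rangle_{A}$ from the preceding theorem, this immediately shows that $\langle D\rangle_{NA}=(-A^{3})^{-\omega(D)}\langle D\rangle_{A}$ is invariant under these five moves.

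For Reidemeister I, I would localize the argument to a single kink and compute $\langle\cdot\rangle_{A}$ directly from the state summation (\ref{arrowpoly}). For a positive kink the oriented smoothing yields a disjoint simple loop attached to a strand and contributes $A\cdot d=-A^{3}-A^{-1}$, while the disoriented smoothing yields a single strand carrying an adjacent pair of cusps of opposite inside/outside type, which cancels under the reduction rule of Fig. \ref{Fig.reduction} to leave a plain strand and contributes $A^{-1}$. The total is
\begin{equation*}
Ad+A^{-1}=A(-A^{2}-A^{-2})+A^{-1}=-A^{3},
\end{equation*}
so introducing a positive kink multiplies the unnormalized arrow polynomial by $-A^{3}$. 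A symmetric calculation gives the factor $-A^{-3}$ for a negative kink.

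Since a positive (respectively, negative) kink increases (respectively, decreases) $\omega(D)$ by one, the prefactor $(-A^{3})^{-\omega(D)}$ contributes a compensating $(-A^{3})^{-1}$ (respectively, $(-A^{3})^{+1}=-A^{3}/A^{6}\cdot A^{6}=\cdots$, giving the inverse of $-A^{-3}$), which exactly cancels the change in $\langle D\rangle_{A}$. Thus $\langle D\rangle_{NA}$ is invariant under Reidemeister I as well, completing the proof. The main delicate point, and the step I expect to require the most care, is the cusp-reduction claim: one must verify in each of the four local pictures (positive or negative kink, each with either orientation along the strand) that the two cusps produced by the disoriented smoothing lie adjacently on the same arc and are of opposite inside/outside type, so that the reduction rule actually applies and no non-trivial circle graph $K_{n}$ is generated. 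Once this local verification is done, the remainder is standard writhe-normalization bookkeeping, exactly parallel to the proof of framing-independence for the classical Kauffman bracket.
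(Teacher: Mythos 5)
The paper does not prove this statement itself; it quotes it from \cite{DKArrow}, and your argument is essentially the standard proof given there: invariance of $\langle D\rangle_{A}$ under RII, RIII and the virtual moves from the preceding theorem, plus the local computation that a positive (resp.\ negative) kink multiplies $\langle D\rangle_{A}$ by $-A^{3}$ (resp.\ $-A^{-3}$) because the oriented smoothing splits off a cusp-free circle contributing $A^{\pm1}d$ while the disoriented smoothing produces an adjacent same-side cusp pair that cancels under the reduction rule, so the writhe normalization $(-A^{3})^{-\omega(D)}$ exactly compensates. Your computation and the point you flag for care (checking the cusp cancellation in all kink configurations, so no $K_{n}$ variable is created) are correct, so the proposal is sound and follows the same route as the cited proof.
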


We apply the invariant to the virtualized trefoil (the mirror of $3.7$ in \cite{Gr}), denoted by $3.7^{*}$ and pictured in Fig. \ref{Fig.VT}. The (unreduced) states of the virtual trefoil are shown in Fig.
 \ref{Fig.SVT}. Thus,
\begin{eqnarray}
   \langle3.7^{*}\rangle_{NA}&=&-A^{-3}(-A^{-5}+K_1^{2}A^{-5}-K_1^{2}A^{3}).
\end{eqnarray}

\begin{figure}[!htbp]
  \centering
  \includegraphics[width=2.5cm]{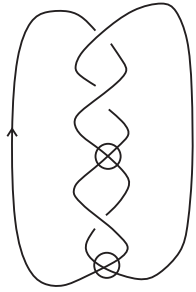}
    \renewcommand{\figurename}{Fig.}
  \caption{Virtualized trefoil $3.7^{*}$.}
  \label{Fig.VT}
\end{figure}

\begin{figure}[!htbp]
  \centering
  \includegraphics[width=14cm]{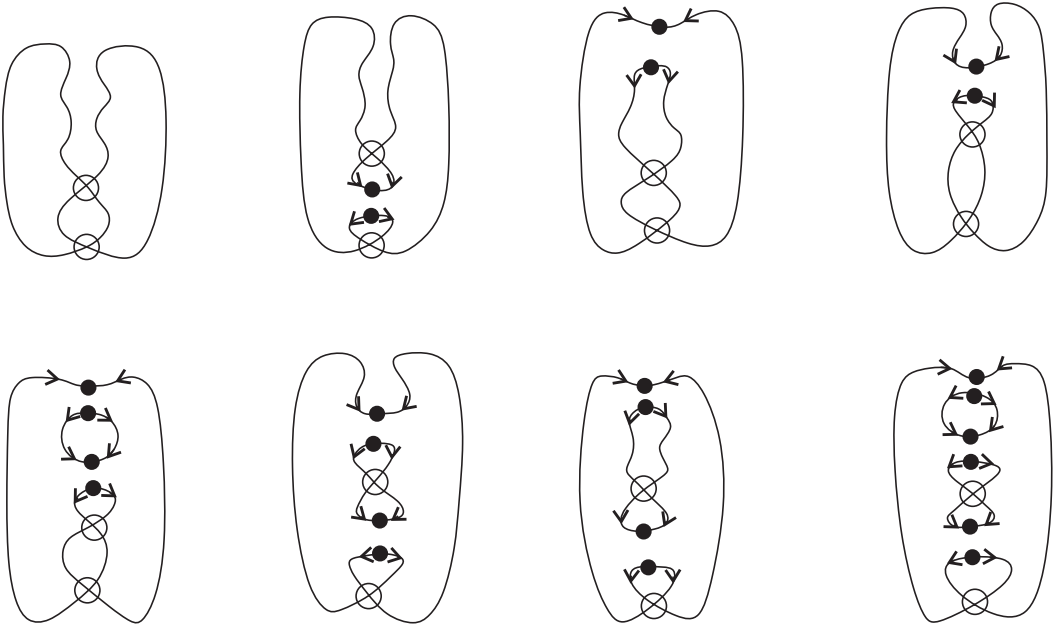}
    \renewcommand{\figurename}{Fig.}
  \caption{States of the virtualized trefoil $3.7^{*}$.}
  \label{Fig.SVT}
\end{figure}

Note that any summand of $\langle D\rangle_{NA}$ has the following form:
\begin{equation}\label{summand}
  A^{s}(K^{j_{1}}_{i_{1}}K^{j_{2}}_{i_{2}}\cdots K^{j_{v}}_{i_{v}}).
\end{equation}
Then the \emph{$k$-degree} of this summand is defined to be
\begin{equation}\label{kdeg}
 i_{1}\times j_{1} + i_{2}\times j_{2} +\cdots + i_{v}\times j_{v},
\end{equation}
which is equal to the half of reduced number of cusps in the state associated
with these variables. Notice that if the summand has no $K_{C}$ variables, then
the $k$-degree is zero.

Let $AS(D)$ denote the set of $k$-degrees obtained from the set of summands of $\langle D\rangle_{NA}$.

\begin{Lemma}(\cite{DKArrow}, Lemma 1.3)
For an oriented virtual link diagram $D$, $AS(D)$ is invariant under the generalized Reidemeister moves.
\end{Lemma}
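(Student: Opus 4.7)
The plan is to derive the statement as an immediate formal consequence of the invariance of the normalized arrow polynomial $\langle D\rangle_{NA}$ under the generalized Reidemeister moves, which is the content of the theorem stated just above the lemma.

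First I would view $\langle D\rangle_{NA}$ as an element of the polynomial ring $\mathbb{Z}[A^{\pm 1}][K_1, K_2, \ldots]$. Every element of this ring has a unique expression as a $\mathbb{Z}[A^{\pm 1}]$-linear combination of pairwise distinct monomials in the $K_i$'s. The $k$-degree of a summand $A^s(K_{i_1}^{j_1} K_{i_2}^{j_2} \cdots K_{i_v}^{j_v})$, as defined in the paragraph above, depends only on the $K$-monomial $K_{i_1}^{j_1} \cdots K_{i_v}^{j_v}$ and is independent of the power of $A$ appearing in the coefficient. Consequently, $AS(D)$ is determined by the (finite) set of $K$-monomials that occur with nonzero $\mathbb{Z}[A^{\pm 1}]$-coefficient in $\langle D\rangle_{NA}$, by applying the $k$-degree function to that set.

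Second, if $D$ and $D'$ are related by a generalized Reidemeister move, the invariance of the normalized arrow polynomial gives $\langle D\rangle_{NA} = \langle D'\rangle_{NA}$ as polynomials. Equality of polynomials implies equality of their supports, so the two underlying sets of $K$-monomials appearing with nonzero coefficient coincide. Applying the $k$-degree function to these two equal sets then yields $AS(D) = AS(D')$.

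The only conceivable subtlety, and the thing I would make sure to flag, is whether the $k$-degree is well defined after collection of terms: different states might contribute terms with the same $K$-monomial but arising from visually different cusp configurations. Because the $k$-degree is extracted purely from the exponent vector of the reduced $K$-monomial in the collected expression, it is independent of the particular state that produced a contribution, and so no inconsistency arises. In particular, no case-by-case verification for the individual Reidemeister moves is required; the actual combinatorial work is already packaged into the invariance theorem for $\langle D\rangle_{NA}$, and the lemma is essentially a corollary obtained by forgetting the $A$-grading and all information in a $K$-monomial except its $k$-degree.
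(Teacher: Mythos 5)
Your proof is correct, but there is no in-paper argument to compare it against: the lemma is quoted from \cite{DKArrow} (Lemma 1.3) as a known result, with no proof given here. Given the paper's definitions your reduction is exactly right: $AS(D)$ is read off from the collected form of $\langle D\rangle_{NA}$ in $\mathbb{Z}[A^{\pm 1}][K_1,K_2,\ldots]$, each $k$-degree depends only on the exponent vector of a $K$-monomial occurring with nonzero coefficient, and distinct monomials are linearly independent in that ring, so equality of the polynomials for diagrams related by generalized Reidemeister moves (the invariance theorem for $\langle D\rangle_{NA}$ stated just before the lemma, \cite{DKArrow} Theorem 1.6) forces equality of the supports and hence of the sets of $k$-degrees. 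Your appeal to that theorem is not circular, since its proof is the usual writhe normalization of the bracket and does not use the lemma; still, it is worth noting (and closer to the order of results in \cite{DKArrow}, where Lemma 1.3 precedes Theorem 1.6) that one can argue directly from the unnormalized polynomial: $\langle D\rangle_{A}$ is unchanged by the Reidemeister moves II, III and the virtual moves, and the first Reidemeister move only multiplies it by the unit $-A^{\pm 3}$, which cannot create, destroy, or alter any $K$-monomial in the support, so the set of $k$-degrees is already invariant before normalizing by the writhe.
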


Dye and Kauffman \cite{DKArrow} showed that the phenomenon of cusped states and extra
variables $K_{n}$ only occurs for virtual links.

\begin{thm}(\cite{DKArrow}, Theorem 1.5)
If $D$ is a classical link diagram then $AS(D)=\{0\}$.
\end{thm}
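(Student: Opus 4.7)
The plan is to show that for classical $D$ every reduced state in the expansion \eqref{arrowpoly} is a disjoint union of plain circles, so that $\langle S\rangle = 1$ for every state $S$. Because the normalization factor $(-A^3)^{-\omega(D)}$ in \eqref{Norarrowpoly} is independent of the variables $K_n$, this will force every summand of $\langle D\rangle_{NA}$ to have $k$-degree $0$, hence $AS(D)\subseteq\{0\}$; since any state contributes at least one such monomial, $AS(D)=\{0\}$.

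First I would fix a state $S$ and observe that, because $D$ has no virtual crossings and both the oriented and the disoriented smoothings of Fig.~\ref{Fig.arrow} are local planar replacements, the curves comprising $S$ lie in the plane: apart from the cusps produced by disoriented smoothings they are disjoint embedded circles. Thus each state circle is a planar embedded oriented closed curve decorated by finitely many cusps.

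The main step, which I expect to be the principal obstacle, is to prove that every such planar state circle $\gamma$ reduces to a circle with no vertices under the rules of Fig.~\ref{Fig.reduction}. Between consecutive cusps along $\gamma$ the orientation is locally constant, and at each cusp the orientation reverses; hence cusps alternate in type (source, sink, source, sink, $\ldots$) around $\gamma$. I would then argue that two adjacent cusps along $\gamma$ can never sit in the zig-zag configuration that blocks reduction. A zig-zag on a sub-arc of $\gamma$ would place the two acute angles on opposite sides of the connecting arc; because $\gamma$ is planar and embedded, such a configuration can be isotoped in the plane into the standard cancellable configuration of Fig.~\ref{Fig.reduction} (or, more strongly, the zig-zag pattern would force a genuine virtual crossing to appear in the reduced graph, contradicting the absence of virtual crossings in $S$). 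Consequently the cancellation rule applies and removes a pair of adjacent cusps, strictly decreasing the cusp count on $\gamma$; iterating yields a plain circle.

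The technical crux is thus the planarity-versus-zig-zag step. One direct route is a local case analysis: enumerate, up to planar isotopy, the possible extensions of a planar arc carrying two adjacent cusps of opposite type, and check in each case that the acute angles lie on the same side of the connecting arc. A cleaner alternative is to attach to each state circle a planar invariant (for instance a rotation/turning count twisted by the cusp data) that is additive over cusps, vanishes on a plain circle, and must vanish on any planar embedded oriented circle; this forces $n=0$ for any reduced circle graph $K_n$ that could occur in the planar state $S$, completing the reduction and hence the proof.
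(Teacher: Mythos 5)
Your reduction strategy (show every state of a classical diagram reduces to plain circles) is the right target, but the step you yourself flag as the crux is based on a false claim, and this is exactly where the content of the theorem lies. Planarity and embeddedness of a state circle do \emph{not} rule out zig-zags: the irreducible circle graphs $K_n$ are themselves planar embedded circles carrying $2n$ cusps, $n$ with the acute angle pointing into the Jordan domain and $n$ pointing out, alternating around the circle --- indeed the paper states explicitly that every reduced circle graph is equivalent, via detour moves, to an embedded graph with no virtual crossings. So a zig-zag neither ``forces a genuine virtual crossing'' nor can it be ``isotoped into the cancellable configuration'': whether the two acute angles lie on the same or opposite sides of the connecting arc is invariant under planar isotopy, and for an embedded circle it simply records whether the two cusps point to the same side (interior/exterior) or to opposite sides. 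Your fallback via a turning-number-type invariant has the same defect: any quantity that vanishes on every planar embedded circle-with-cusps would also vanish on the planar embedded representative of $K_n$, so it cannot force $n=0$.

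Consequently your argument never uses classicality beyond ``the state has no virtual crossings,'' and that hypothesis alone is insufficient, since the obstruction $K_n$ lives happily in the plane. What is actually needed is to exploit how the cusps are \emph{created}: at each disoriented smoothing of a crossing of a planar diagram the two cusps of the pair sit in a specific mutual position, and one must track this combinatorial data around the state circles (for instance by an innermost-pair or word-cancellation argument of the kind this paper runs in the proof of its Theorem~\ref{main}, using braid form and the labels recording on which side of each cusp the relevant region lies) to conclude that after reduction no cusps survive. The paper itself does not reprove the statement --- it cites Theorem~1.5 of \cite{DKArrow} --- but any correct proof must bring in the crossing/smoothing structure of the classical diagram, which is the missing ingredient in your proposal.
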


\section{New criteria from arrow polynomials}
\noindent

We now determine the characteristics of arrow polynomials of checkerboard colorable virtual link diagrams. For this we need the concept of virtual braid.
Kauffman and Lambropoulou \cite{KauLam} gave a general method for converting virtual links to
virtual braids. The braiding method given there is quite general and applies to
all the categories in which braiding can be accomplished. It includes the braiding of
classical, virtual, flat, welded, unrestricted, and singular knots and links.
We just explain the braiding techniques for a classical crossing and a virtual crossing by two schemas as shown in Figs. \ref{Fig.fulltwist} and \ref{Fig.halftwist}.

\begin{figure}[!htbp]
  \centering
  \includegraphics[width=12cm]{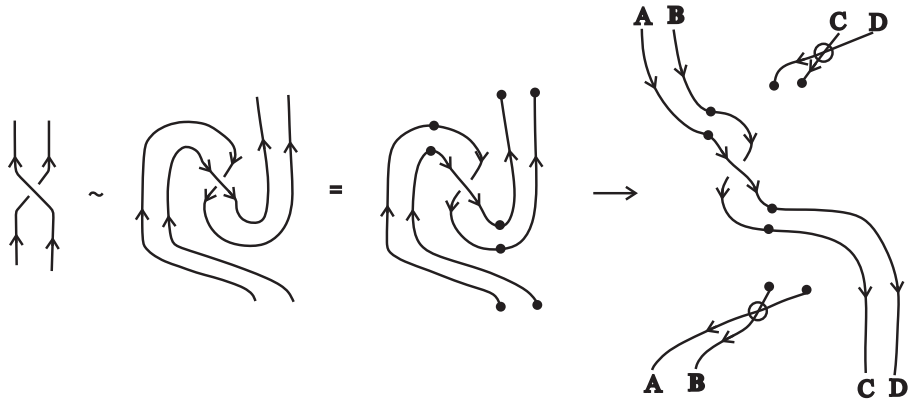}
    \renewcommand{\figurename}{Fig.}
  \caption{Full twist for a classical crossing.}
  \label{Fig.fulltwist}
\end{figure}

\begin{figure}[!htbp]
  \centering
  \includegraphics[width=12cm]{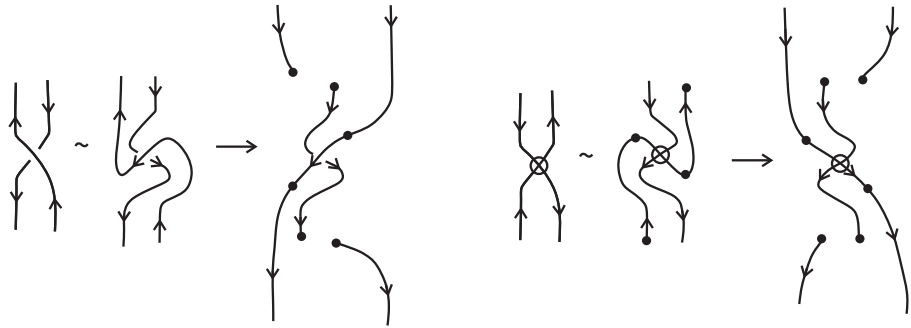}
    \renewcommand{\figurename}{Fig.}
  \caption{Half twist for a classical/virtual crossing.}
  \label{Fig.halftwist}
\end{figure}

We give an example for the braiding process of a virtual link as shown in Fig. \ref{Fig.example}. In the second and third pictures we show a
virtual knot and its preparation for braiding by crossing rotation, respectively.
In the forth picture, we break each over-arc, and name each pair of endpoints with the same letter.
Note that the closure of the virtual braid in Fig. \ref{Fig.example} is not checkerboard colorable.

\begin{figure}[!htbp]
  \centering
  \includegraphics[width=15cm]{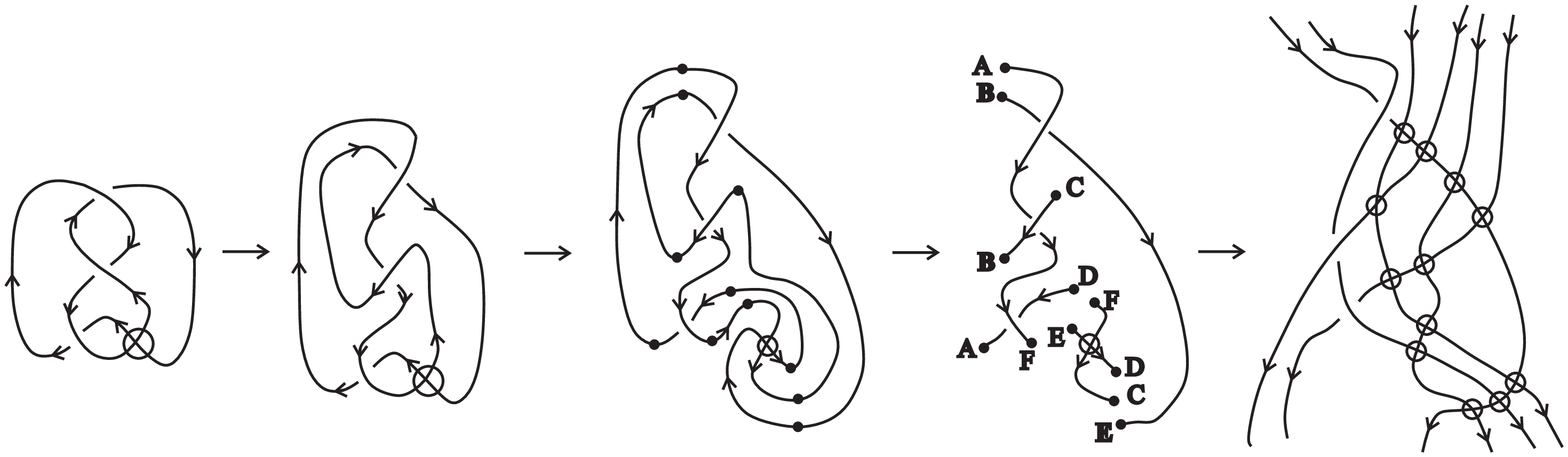}
    \renewcommand{\figurename}{Fig.}
  \caption{Braiding an example.}
  \label{Fig.example}
\end{figure}

\begin{thm}(\cite{KauLam}, Theorem 1)\label{braiding}
Every (oriented) virtual link can be represented by a virtual
braid whose closure is equivalent to the original link.
\end{thm}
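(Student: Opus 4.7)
The plan is to generalize Alexander's classical braiding theorem, using the two local moves already depicted in Figs. \ref{Fig.fulltwist} and \ref{Fig.halftwist} as the central tools. Fix an oriented virtual link diagram $D$ drawn in $\mathbb{R}^{2}$, and single out a \emph{braid axis}, for instance a chosen point $O$ in the plane (or equivalently, a preferred vertical direction). Declare an arc of $D$ to be \emph{descending} if, when we read the plane vertically, it is consistently oriented downward (equivalently, its angular coordinate around $O$ moves monotonically). We wish to modify $D$, by generalized Reidemeister moves, so that every arc is descending; once this is achieved, the diagram can be read horizontally as the closure of a virtual braid.

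First I would place $D$ in general position with respect to the chosen axis and identify the finite collection of \emph{bad} (non-descending) arcs. At each bad arc I would apply the appropriate local surgery: at a classical crossing sitting on a bad arc, perform a \emph{full twist} as in Fig. \ref{Fig.fulltwist}, which replaces the ascending segment by a pair of descending strands encircling the axis together with an extra pair of classical crossings; at a virtual crossing sitting on a bad arc, perform the corresponding \emph{half twist} as in Fig. \ref{Fig.halftwist}. In either case the move is manifestly a composition of generalized Reidemeister moves (possibly detour moves), so the virtual link type is preserved, while the local piece that was directed against the braid axis is replaced by material that goes around the axis in the correct rotational sense.

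Next I would run an induction on a suitable complexity measure — say the total number of bad arcs, or more robustly a lexicographic pair (number of bad arcs, total winding defect around $O$). Each application of a full or half twist strictly decreases this measure: the offending arc is absorbed into the braid body, and while new arcs are introduced by the surgery, they are descending by construction. After finitely many steps $D$ has been turned into a diagram every arc of which winds monotonically around $O$, and cutting along a ray from $O$ exhibits it as the closure of a virtual braid $\beta$. By construction $\beta$ is built only from classical and virtual elementary braid generators, and its closure is equivalent, through the recorded sequence of generalized Reidemeister moves, to the original $D$.

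The main obstacle I anticipate is not any single local move — each of the twists is checked by a direct Reidemeister argument — but rather the bookkeeping needed to make the induction watertight. Two concerns must be addressed: one must show that the full/half twist genuinely reduces the chosen complexity measure even when a bad arc meets several crossings (classical and virtual mixed), and one must verify that the procedure handles virtual crossings correctly so that after all bad arcs are removed, the detour move indeed permits the virtual strands to be re-routed into honest virtual braid generators. Both points can be handled by performing the twists in an order dictated by the cyclic arrangement of crossings along each bad arc, together with an appeal to Theorem \ref{braiding}-style detour arguments to reposition virtual crossings at the end.
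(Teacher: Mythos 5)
You should note first that the paper does not prove this statement at all: it is quoted as Theorem 1 of Kauffman--Lambropoulou \cite{KauLam}, and the surrounding text (Figs.~\ref{Fig.fulltwist}, \ref{Fig.halftwist}, \ref{Fig.example}) only recalls the braiding schemas in enough detail to justify Remark \ref{ck}. So the comparison is really with the proof in \cite{KauLam}, which your sketch resembles in spirit (an Alexander-type argument adapted to virtual diagrams) but not in substance.

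The gaps are concrete. First, you misassign the role of the two local moves: in \cite{KauLam}, and as illustrated in Fig.~\ref{Fig.example} (``preparation for braiding by crossing rotation''), the full twist and half twist are rotations applied to a crossing whose strands do not both point downwards, so that after this preparation every classical and virtual crossing points down; they are not surgeries performed on a bad arc that replace it by strands encircling a braid axis. After the rotations, the remaining obstructions are the free up-arcs (the broken over-arcs in the fourth picture of Fig.~\ref{Fig.example}), and the substantive part of the algorithm is to cut each up-arc and pull its two endpoints to the top and bottom of the diagram, passing entirely over, entirely under, or virtually through the rest of the diagram, and then to verify that this braiding move is realized by generalized Reidemeister (in particular detour) moves so that the closure of the resulting braid is the original link. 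Your proposal never engages with this step, which is where the actual work of the theorem lies. Second, your induction is asserted rather than proved: you claim each twist strictly decreases the pair (number of bad arcs, winding defect), but a full twist introduces new arcs and new classical crossings and you give no argument that the created material is descending or that the measure drops when a bad arc carries several mixed crossings; you yourself flag exactly these two issues and then dispose of them by fiat, ending with an appeal to ``Theorem \ref{braiding}-style detour arguments,'' which read literally is circular. As it stands this is a plausible plan, not a proof; completing it would amount to reproducing the braiding algorithm and its invariance checks from \cite{KauLam}.
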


\begin{remark}\label{ck}
According to the braiding technique, described in Theorem 1 \cite{KauLam}, which just changes the relative position of classical and virtual crossings by crossing rotation, the original virtual link diagram and the closure of its virtual braid have the same checkerboard colorability.
\end{remark}

Now we state the main result.

\begin{thm}\label{main}
Let $D$ be an oriented checkerboard colorable virtual link diagram.
Then
\begin{enumerate}
  \item[(1)] $AS(D)$ only contains even integer; and
  \item[(2)] for any summand $A^{s}K^{j_{1}}_{i_{1}}K^{j_{2}}_{i_{2}}\cdots K^{j_{v}}_{i_{v}}$ with $1\leq i_1<i_2<\cdots <i_v$, $j_t\geq 1$ for $t=1,2,\cdots,v$, and $v\geq 1$ of $\langle D\rangle_{NA}$, we have $2i_v\leq \sum_{t=1}^{v}i_t\cdot j_t$. In particular, $\langle D\rangle_{NA}$ has no summands like $A^{s}K_{i}$.
\end{enumerate}
\end{thm}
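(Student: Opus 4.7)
My plan is to reduce $D$ to a braided presentation using Theorem~\ref{braiding} and Remark~\ref{ck}, and then to track how the two cusps produced by each disoriented smoothing distribute across the loops of any given state, using the alternate orientation of $|D|$ that witnesses checkerboard colorability.

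First, I would replace $D$ by the closure of a virtual braid $\beta$ whose underlying $4$-valent graph still admits an alternate orientation (this is the point of Remark~\ref{ck}). In braid form every classical crossing has both strands oriented downward, so in each state $S$ of the arrow expansion the oriented smoothing contributes no cusps while the disoriented smoothing creates a pair of cusps---one ``both entering'' cusp on its top horizontal arc and one ``both leaving'' cusp on its bottom horizontal arc. The alt orientation, alternating around every $4$-valent vertex of $|D|$, induces a consistent alt orientation along every loop of $S$ and forces the cusps on each loop to alternate in type, so after reduction each loop becomes some circle graph $K_{n_i}$ with $\sum_i n_i = \sum_{t=1}^{v} i_t j_t$.

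The crucial geometric step I intend to prove is the following pairing lemma: \emph{in a checkerboard colorable braid, the two cusps created by any single disoriented smoothing always land on distinct loops of $S$.} The idea is that the two horizontal arcs locally border opposite checkerboard regions of the same color, and the alt orientation channels the two arcs into loops that cannot reconnect without violating the alternation along the way. Granting this lemma, claim (2) falls out by a pigeonhole argument: any reduced copy of $K_{i_v}$ contains $2i_v$ cusps, each paired with a partner cusp on a different loop; the reduction rule of Fig.~\ref{Fig.reduction} cancels adjacent same-side cusps only within a single loop and hence cannot cancel a partner cusp on another loop, so the cusps remaining on loops other than our chosen $K_{i_v}$ total at least $2i_v$, giving $\sum_t i_t j_t \geq i_v + i_v = 2 i_v$.

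For claim (1), I plan to refine the pairing lemma by attaching to each cusp pair a ``color'' recording on which side of the alt-oriented loop each cusp's inside lies. Since every classical crossing of a checkerboard colorable diagram is \emph{even} (the key observation in the proof of Theorem~\ref{main0}), and since reduction cancels two adjacent cusps only when their insides lie on the same side of the connecting segment, reduction-cancellations on partner loops should be forced to occur in linked pairs; a global parity count then gives that $\sum_i n_i$ is even. The main obstacle I anticipate is the pairing lemma itself: one must show that no path through intervening virtual crossings reroutes the two horizontal arcs of a disoriented smoothing onto a common loop. Once that geometric fact is pinned down, the reduction bookkeeping needed for (1) reduces to a careful but routine parity count.
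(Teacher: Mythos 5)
Your starting point (pass to a braid closure via Theorem~\ref{braiding} and Remark~\ref{ck}, then track where the two cusps of each disoriented smoothing go) matches the paper, but the ``pairing lemma'' on which everything else rests is false as stated. The two cusps created by a single disoriented smoothing \emph{can} lie on the same loop of an unreduced state, even for a checkerboard colorable diagram: take the closure of the one-crossing $2$-strand braid (the unknot with one kink, a classical and hence checkerboard colorable diagram); disorienting that crossing produces one loop carrying both cusps, which then cancel under the reduction rule. The paper itself allows for this situation --- Case~2 of Claim~4 explicitly treats ``two cusps of a crossing belonging to a single circle graph.'' What is actually true, and what the paper proves as Claim~3, is the \emph{reduced} statement: a reduced circle graph cannot carry both cusps of the same crossing. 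Even that weaker statement needs real input from the coloring: the coloring-induced labels $i^{<}$/$i^{>}$ (both cusps of a crossing get the same label) together with the alternation of maxima and minima along a loop (Claims~1 and~2) force a word of the form $i^{<}(\cdots)i^{<}$ to be reducible. Your geometric sketch (``opposite checkerboard regions, so the arcs cannot reconnect'') does not supply this.

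Even granting a corrected pairing statement, your deduction of (2) has a second gap: you argue that the partner cusp, living on a different loop, ``cannot be cancelled,'' but reduction acting \emph{within that other loop} can certainly cancel it against a third cusp there. The paper handles this with a chain argument: if the partner is cancelled against some cusp $j$, pass to $j$'s partner, and iterate until a surviving cusp of the required extremum type is found, then check (using the parity of positions in the word of the $K_{i_v}$ circle) that it cannot lie on the chosen $K_{i_v}$ circle. Without this propagation step the pigeonhole count $\sum_t i_t j_t \geq 2 i_v$ does not follow. Finally, your plan for (1) (``linked pairs of cancellations plus a global parity count'') is not yet an argument; the paper proves evenness of the $k$-degree by induction, changing one crossing at a time from disoriented to oriented smoothing and checking in each merge/split case (Cases~1.1--2.2) that the total $K$-index changes by an even amount. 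You would need to supply an argument of comparable precision rather than the asserted parity bookkeeping.
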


\begin{proof}

By Theorem \ref{braiding} and the above Remark \ref{ck}, we can arrange $D$ as a virtual braid $\mathcal{B}$ with all strands oriented downwards whose closure $\mathcal{\overline{B}}$ is equivalent to $D$.
Note that $\mathcal{\overline{B}}$ is also checkerboard colorable and has the same set of classical crossings as $D$ (see Fig. \ref{Fig.example} for an example).

Assume that $C$ is one checkerboard coloring of $\mathcal{\overline{B}}$. Let $\sigma$ be an (unreduced) state of $\mathcal{\overline{B}}$ without applying reduction rule in Fig. \ref{Fig.reduction}.
According to the coloring $C$, we can obtain a checkerboard coloring $C^{\sigma}$ of the state $\sigma$ where the colorings of arcs are almost the same as the coloring $C$ of $\mathcal{\overline{B}}$ except small segments of arcs around classical crossings (see Fig. \ref{Fig.cbs}). Maxima point and minima point of cusps as shown in Fig. \ref{Fig.cbs} can be defined for any closure of virtual braids (not necessarily checkerboard colorable).

\begin{figure}[!htbp]
\centering
\includegraphics[width=12cm]{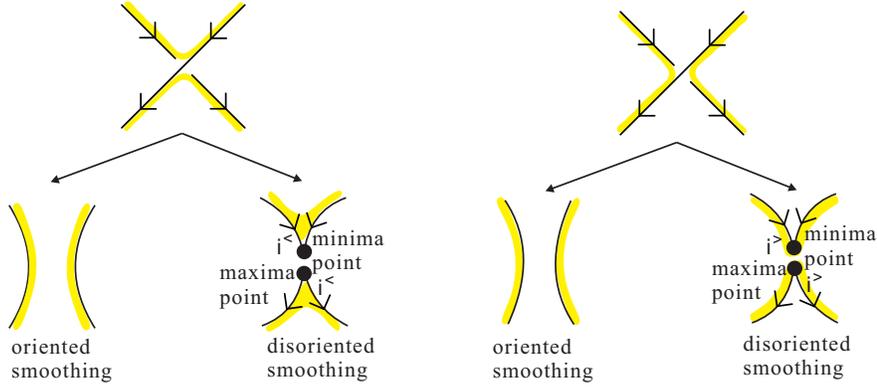}
\renewcommand{\figurename}{Fig.}
\caption{Checkerboard coloring $C^{\sigma}$ of state $\sigma$ around a classical crossing $i$ (the crossing $i$ can also be negative).}\label{Fig.cbs}
\end{figure}

Take a circle graph of the state $\sigma$, the cusp of the circle graph coming from the classical crossing $i$ will be denoted by $i^{<}$ (or $i^{>}$) if the coloring is inside the acute (or obtuse) angle of the cusp. This notation can be defined for any checkerboard colored virtual link diagram. Note that there will be two $i^{<}$'s or two $i^{>}$'s and it will be impossible that one of the pair of cusps of the classical crossing $i$ is $i^{<}$ and the other $i^{>}$ (see Fig. \ref{Fig.cbs}).

We can obtain a \emph{word} from a circle graph if we read cusp in a clockwise direction or a counterclockwise direction and the starting cusp can be chosen at will.
A circle graph is called \emph{trivial} if it does not contain any cusp. The Claim 1 is obvious.

\textbf{Claim 1.} For any circle graph of $\sigma$, it contains maxima point and minima point alternatively. If the circle graph is reduced (but keeping the positions of the remaining cusps), it still holds.

Claim 1 is true for the closure of any virtual braid, not just for the checkerboard colorable case.

\textbf{Claim 2.} If $i^{<}X{i}^{<}Y$ or $i^{>}X{i}^{>}Y$ is a word of a circle graph of $\sigma$, then $X$ and $Y$ are both of even length whether $X$ and $Y$ are reduced or not.

\begin{proof}
Note that one cusp of $i$ has minima point and another has maxima point, so we need go from the minima point to the maxima point or vice versa. By Claim 1 subwords $X$ and $Y$ are both of even length.
\end{proof}

\textbf{Claim 3.} For any reduced circle graph of $\sigma$, it is trivial or the labels of its cusps are all different.

\begin{proof}
If it is not trivial and contains two cusps labelled $i$. Without loss of generality we assume that the two $i$ cusps are both $i^{<}$'s. By Claim 1 the word of the circle graph is either $i^{<}({1}^{>}{2}^{<}\cdots$
${2k-1}^{>}{2k}^{<}){i}^{<}\cdots$ or $i^{<}({1}^{<}{2}^{>}\cdots {2k-1}^{<}{2k}^{>}){i}^{<}\cdots$. In each case, it is not reduced, a contradiction.
\end{proof}

\textbf{Claim 4.} For the state $\sigma$, its $k$-degree is an even integer.

\begin{proof}
Let $\sigma_{0}$ be the state with each classical crossing applying oriented smoothing. The $k$-degree of $\sigma_{0}$ is 0.  The state $\sigma_0$ can be obtained from $\sigma$ by changing the smoothing ways of some classical crossings from disoriented smoothing to oriented smoothing.
It suffices for us to show that the parity of $k$-degree of two states $\sigma_1$ and $\sigma_2$ are the same if $\sigma_2$ can be obtained from $\sigma_1$ by changing the smoothing way of a single classical crossing from disoriented smoothing to oriented smoothing. Let $i$ be the changed classical crossing. Without loss of generality, we assume that the two cusps of the crossing $i$ in $\sigma_1$ are both $i^{<}$'s.

\textbf{Case 1.} Two cusps of $i$ in $\sigma_1$ belong to different circle graphs. In this case two circle graphs of $\sigma_1$ will be merged to a circle graph of $\sigma_2$. There are three subcases.

\textbf{Case 1.1.} Two cusps both appear in the reduced word of the two circle graph. We assume the two reduced words are $i^{<}({1}^{>}{2}^{<}\cdots{2k-1}^{>})$ and ${i}^{<}({2l+1}^{>}{2l+2}^{<}\cdots {2m-1}^{>})$, respectively.
They contribute $K_{k}$ and $K_{m-l}$ to the state $\sigma_1$, respectively. When we only change the smoothing way of $i$, the two circles are merged to one circle with the word $({1}^{>}{2}^{<}\cdots {2k-1}^{>})({2l+1}^{>}{2l+2}^{<}\cdots {2m-1}^{>})$ that will be reduced to ${1}^{>}{2}^{<}\cdots {2(k-m+l)}^{<}$
if $k\geq m-l$ or ${2(k+l)}^{<}{2(k+l)+1}^{>}\cdots {2m-1}^{>}$ if $k<m-l$ (see Fig. \ref{Fig.twotoone1111}).
And the new circle in $\sigma_{2}$ will contribute $K_{|k-m+l|}$ to the state $\sigma_2$. Note that $|k-m+l|$ and $k+m-l$ have the same parity.

\begin{figure}[!htbp]
  \centering
  \includegraphics[width=8cm]{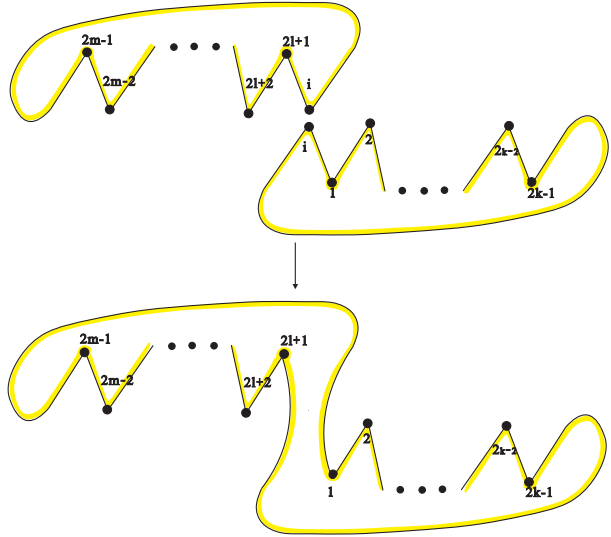}
    \renewcommand{\figurename}{Fig.}
  \caption{Case 1.1.}
  \label{Fig.twotoone1111}
\end{figure}

\textbf{Case 1.2.} Exactly one cusp appear in the reduced word of the two circle graph. We assume the reduced word containing $i$ is $i^{<}({1}^{>}{2}^{<}\cdots{2k-1}^{>})$ and the other word is ${i}^{<}({2l+1}^{<}{2l+2}^{>}\cdots {2m-1}^{<})$.
They contribute $K_{k}$ and $K_{m-l-1}$ to the state $\sigma_1$, respectively. When we only change the smoothing way of $i$, the two circles are changed into one circle with word $({1}^{>}{2}^{<}\cdots {2k-1}^{>})({2l+1}^{<}{2l+2}^{>}\cdots {2m-1}^{<})$, which contributes $K_{k+m-l-1}$ to the state $\sigma_2$.

\textbf{Case 1.3.} Neither of two cusps appear in the reduced word of the two circle graph. This case is similar with Case 1.1, so we leave the proof to the reader.

\textbf{Case 2.} Two cusps of $c$ in $\sigma_1$ belong to a single circle graph. In this case, by Claim 3, the circle graph is not reduced. Note that one circle graph of $\sigma_1$ will be split to two circle graphs of $\sigma_2$. There are two subcases:

\textbf{Case 2.1} The word of the circle graph in $\sigma_1$ is $$i^{<}({1}^{>}{2}^{<}\cdots{2k-1}^{>}{2k}^{<}){i}^{<}({2l+1}^{<}{2l+2}^{>}\cdots {2m}^{>}),$$
\noindent which will be reduced to $i^{<}({1}^{>}{2}^{<}\cdots {2k}^{<})({2l+2}^{>}\cdots {2m}^{>})$.
It contributes $K_{k+m-l}$ to the state $\sigma_1$. When we only change the smoothing way of $i$, the one circle is changed into two circles with words ${1}^{>}{2}^{<}\cdots {2k-1}^{>}{2k}^{<}$ and ${2l+1}^{<}{2l+2}^{>}\cdots {2m-1}^{<}{2m}^{>}$, respectively (see Fig. \ref{Fig.twotoone1}).
And the two new circles in $\sigma_{2}$ will contribute $K_{k}$ and $K_{m-l}$ to the state $\sigma_2$, respectively.

\begin{figure}[!htbp]
  \centering
  \includegraphics[width=8cm]{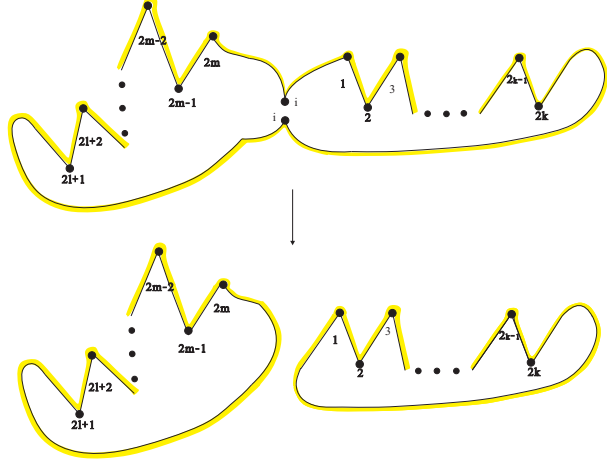}
    \renewcommand{\figurename}{Fig.}
  \caption{Case 2.1.}
  \label{Fig.twotoone1}
\end{figure}

\textbf{Case 2.2} The word of the circle graph in $\sigma_1$ is $$i^{<}({1}^{<}{2}^{>}\cdots {2k-1}^{<}{2k}^{>}){i}^{<}({2l+1}^{<}{2l+2}^{>}\cdots {2m}^{>}).$$
\noindent It will be reduced to $2^{>}\cdots 2(k-m+l)+1^{<}$ if $k\geq m-l$ and $2l+2k+1^{<}\cdots 2m^{>}$ if $k< m-l$, which both contribute $K_{|k-m+l|}$ to the state $\sigma_1$. When we only change the smoothing way of $i$, the one circle is changed into two circles with reduced words ${1}^{<}{2}^{>}\cdots {2k-1}^{<}{2k}^{>}$ and ${2l+1}^{<}{2l+2}^{>}\cdots {2m-1}^{<}{2m}^{>}$, respectively.
And the two new circles in $\sigma_{2}$ will contribute $K_{k}$ and $K_{m-l}$ to the state $\sigma_2$, respectively.
\end{proof}

By Claim 4. Theorem \ref{main} (1) holds. Now we prove Theorem \ref{main} (2).

Suppose that $\langle D\rangle_{NA}$ contains a summand with $K_{i_{1}}^{j_{1}}K_{i_{2}}^{j_{2}}\cdots K_{i_{v}}^{j_{v}}$ ($1\leq i_1<i_2<\cdots <i_v$ and $j_t\geq 1$ for $t=1,2,\cdots,v$), and the state $\sigma$ contains $j_1$ $K_{i_{1}}$ reduced circle graphs, $j_2$ $K_{i_{2}}$ reduced circle graphs, $\cdots$, and $j_v$ $K_{i_{v}}$ reduced circle graphs. Considering a $K_{i_{v}}$ reduced circle graph, then, by Claim 3, labels of all cusps of the circle are all different.

Without loss of generality, we assume the word of the reduced circle graph is $${1}^{<}{2}^{>}\cdots {2i_{v}-1}^{<}{2i_{v}}^{>}$$ and ${2i-1}^{<}$ and ${2i}^{>}$ ($1\leq i\leq i_{v}$) correspond to maxima point and minima point,  respectively. We claim that there must have $i_{v}$ survived cusps with minima points and $i_{v}$ survived cusps with maxima points in other reduced circle graphs of the state $\sigma$.

Otherwise if the cusp ${2i-1}^{<}$ ($1\leq i\leq i_{v}$) with minima point cancels with some cusp $j^{<}$ with maxima point, then the cusp ${j}^{<}$ with
minima point may survive or be cancelled. If it survives, then we show that it is our needed survived cusp with minima point by proving it can not be in the reduced circle graph $K_{i_{v}}$.
If ${j}^{<}$ is in the reduced circle graph $K_{i_{v}}$, then ${j}$ should be an even number since it has the minima point.
But since ${j}$ has the superscript $^{<}$, then $j$ should be an odd number by the word of reduced circle graph $K_{i_{v}}$, a contradiction.
If it is cancelled, we continue this process until we obtain a survived cusp with minima point.
\end{proof}

\section{Applications}
\noindent

In \cite{Ima}, Imabeppu compared the two checkerboard colorability criteria based on Sawollek polynomials with other three known criteria, and proved that these five criteria are all independent.
By using these criteria Imabeppu determined checkerboard colorability of virtual knots up to four
classical crossings, with seven exceptions (that is, virtual knots $4.55, 4.56, 4.59, 4.72, 4.76, 4.77, 4.96$).

We use Theorem \ref{main} to detect the checkerboard colorability of the remaining seven virtual knots. The normalized arrow polynomials of virtual knots $4.55$, $4.56$, $4.59$, $4.72$, $4.76$, $4.77$, $4.96$ are as follows.

$$\langle4.55\rangle_{NA}=A^4+A^{-4}+1-(A^4+A^{-4}+2)K_1^{2}+2K_2,$$

$$\langle4.56\rangle_{NA}=A^4 \left(-\left({K_1}^2-1\right)\right)+\frac{1-{K_1}^2}{A^4}-2 {K_1}^2+2 {K_2}+1,$$

$$\langle4.59\rangle_{NA}=\frac{A^8 \left({K_2}-{K_1}^2\right)+A^4 \left(3-2 {K_1}^2\right)-{K_1}^2+{K_2}}{A^4},$$

$$\langle4.72\rangle_{NA}=1,$$

$$\langle4.76\rangle_{NA}=\frac{A^8 \left({K_2}-{K_1}^2\right)+A^4 \left(3-2 {K_1}^2\right)-{K_1}^2+{K_2}}{A^4},$$

$$\langle4.77\rangle_{NA}=\frac{A^8 \left({K_2}-{K_1}^2\right)+A^4 \left(3-2 {K_1}^2\right)-{K_1}^2+{K_2}}{A^4},$$

$$\langle4.96\rangle_{NA}=\frac{{K_1}^2}{A^6}+A^4 ({K_3}-{K_1} {K_2})-A^2 \left({K_1}^2-1\right)-{K_1} {K_2}+{K_1}.$$

Note that the arrow polynomials of virtual knots $4.55, 4.56, 4.59$, $4.76, 4.77$ containing a summand with $K_2$, $4.96$ containing a summand with $K_3$ are against Theorem \ref{main} (2), and virtual knot $4.96$ is also against Theorem \ref{main} (1). So they are all not checkerboard colorable. It is unknown to us whether virtual knot $4.72$ is checkerboard colorable or not.

\section{Acknowledgements}
This work is supported partially by Hu Xiang Gao Ceng Ci Ren Cai Ju Jiao Gong Cheng-Chuang Xin Ren Cai (No. 2019RS1057).
Deng is also supported by Doctor's Funds of Xiangtan University (No. 09KZ$|$KZ08069).
Jin is also supported by NSFC (No. 11671336) and the Fundamental Research
Funds for the Central Universities  (No. 20720190062).
Kauffman is also supported by the Laboratory of Topology and Dynamics, Novosibirsk State University (contract no. 14.Y26.31.0025 with the Ministry of Education and Science of the Russian Federation).

\end{document}